\DeclareMathOperator{\Aut}{Aut}
\DeclareMathOperator{\GL}{GL}
\numberwithin{equation}{section}
\newtheorem{thm}[equation]{Theorem}
\newtheorem{prop}[equation]{Proposition}
\newtheorem{conj}[equation]{Conjecture}
\theoremstyle{definition}
\newtheorem{exmp}[equation]{Example}
\theoremstyle{remark}
\newtheorem{rem}[equation]{Remark}
\theoremstyle{remark}
\newtheorem{rems}[equation]{Remarks}
\thanks{2000 {\it Mathematics Subject Classification}.
%Primary
51E24, 20E42, 20G15.}
\title[On Tits' Centre Conjecture]
{On Tits' Centre Conjecture \\ for Fixed Point Subcomplexes}
\author[M.\  Bate]{Michael Bate}
\address%[M.\  Bate]
{Department of Mathematics,
University of York,
York YO10 5DD,
United Kingdom}
\email{meb505@york.ac.uk}
\author[B.\ Martin]{Benjamin Martin}
\address%[B.\ Martin]
{Mathematics and Statistics Department,
University of Canterbury,
Private Bag 4800,
Christchurch 8140,
New Zealand}
\email{B.Martin@math.canterbury.ac.nz}
\author[G. R\"ohrle]{Gerhard R\"ohrle}
\address%[G.~R\"{o}hrle]
{Fakult\"at f\"ur Mathematik,
Ruhr-Universit\"at Bochum,
D-44780 Bochum, Germany}
\email{gerhard.roehrle@rub.de}
\begin{document}

\begin{abstract}
We give a short and uniform proof of a special case of Tits' Centre Conjecture
using a theorem of J-P.\   Serre \cite{serre2} and a result from \cite{BMR}.
We consider fixed point subcomplexes $X^H$ of the building $X = X(G)$ of
a connected reductive algebraic group $G$, where
$H$ is a subgroup of $G$.
\end{abstract}

\maketitle

\section{Introduction}
\label{s:intro}
Let $G$ be a connected reductive linear algebraic group
defined over an algebraically closed field $k$.
Let $X = X(G)$ be the spherical Tits building of $G$,
cf.\ \cite{tits1}.
Recall that the simplices in $X$ correspond to
the parabolic subgroups of $G$, \cite[\S 3.1]{serre2};
for a parabolic subgroup $P$ of $G$, we let $x_P$ denote the
corresponding simplex of $X$.
The conjugation action of $G$ on itself naturally induces an action of $G$
on the building $X$, so the image of $G$ is a subgroup of the automorphism
group of $X$.
Given a subcomplex $Y$ of $X$, let $N_G(Y)$ denote
the subgroup of $G$ consisting of elements which stabilize $Y$
(in this induced action).

Recall the \emph{geometric realization} of $X$ as a bouquet of $n$-spheres.
A subcomplex $Y$ of $X$ is called \emph{convex} if whenever two points of
$Y$ (in the geometric realization) are not opposite in $X$, then  $Y$
contains the unique geodesic joining these points, \cite[\S 2.1]{serre2}.
A convex subcomplex $Y$ of $X$ is
\emph{contractible} if it has the homotopy type of a point,
\cite[\S 2.2]{serre2}.
The following is a version due to J-P.\  Serre
of the so-called ``Centre Conjecture'' by J.\ Tits,
cf.\  \cite[Lem.\ 1.2]{tits0},
\cite[\S 4]{serre1.5},  \cite[\S 2.4]{serre2}, \cite{tits2}.
This has been proved by B.\ M\"uhlherr and J.\ Tits for spherical
buildings of classical type \cite{muhlherrtits}.  The simplex referred to in the conjecture is called a {\em centre} for $Y$.

\begin{conj}
\label{conj}
Let $Y$ be a convex contractible subcomplex of $X$. Then there is a
simplex in $Y$ which is stabilized by all automorphisms of $X$ 
which stabilize $Y$.
\end{conj}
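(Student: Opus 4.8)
The plan is to reduce the conjecture to the case of a fixed point subcomplex $Y = X^H$ for a suitable subgroup $H \le G$, where the dictionary between simplices of $X$ and parabolic subgroups of $G$ can be brought to bear. Recall that a simplex $x_P$ lies in $X^H$ precisely when $H$ normalizes $P$, equivalently when $H \le P$, since parabolic subgroups are self-normalizing; thus the simplices of $X^H$ are exactly those $x_P$ with $H \le P$. The first observation I would record is that such a fixed point subcomplex is automatically convex, so the hypothesis of the conjecture becomes a constraint on the homotopy type alone. I would therefore concentrate on the implication: if $Y = X^H$ is contractible, then $Y$ has a centre.

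The engine of the argument is the interplay between contractibility of $X^H$ and the failure of complete reducibility of $H$. Using Serre's theorem from \cite{serre2} --- the dichotomy that a convex subcomplex either contains a pair of opposite simplices or else is contractible --- I would argue that contractibility of $X^H$ forces $H$ to be \emph{not} $G$-completely reducible: were $H$ $G$-completely reducible, every parabolic $P \supseteq H$ would admit an opposite parabolic $Q \supseteq H$, producing opposite simplices $x_P, x_Q$ in $Y$ and contradicting contractibility. Having reached a non-$G$-completely-reducible subgroup, I would invoke the geometric (Kempf--Rousseau / GIT) machinery of \cite{BMR}: the tuple of generators of $H$ has a non-closed $G$-orbit, and there is a canonical optimal destabilizing class of cocharacters $\Omega$, unique up to the expected conjugacy, whose associated parabolic $P = P_\lambda$ (for $\lambda \in \Omega$) contains $H$. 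The simplex $x_P$ then lies in $Y$ and is my candidate centre.

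It remains to verify that $x_P$ is stabilized by every automorphism of $X$ that stabilizes $Y$, and this is where the real work lies. The optimality of $\Omega$ is defined through a length function on cocharacters and the associated instability measure, both built from the root datum and the metric on the geometric realization; since any automorphism of $X$ preserving $Y$ is an isometry preserving the relevant combinatorial and metric structure, I expect it to carry the optimal class to the optimal class of the image subgroup, and hence to fix $x_P$. The delicate point is that an automorphism $\phi$ stabilizing $Y$ need not normalize $H$; it only satisfies $X^{\phi(H)} = X^H$. So the key lemma to establish is that the optimal parabolic $P$ depends only on the subcomplex $Y$ and not on the particular $H$ realizing it --- equivalently, that $x_P$ is characterized by an intrinsic extremal property of $Y$ as a subcomplex of $X$. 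Proving this canonicity, and thereby upgrading $N_G(H)$-invariance to full invariance under the stabilizer of $Y$ in $\Aut(X)$, is the main obstacle.

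Finally, I should be candid about the scope. This strategy only addresses convex contractible subcomplexes that actually arise as fixed point subcomplexes $X^H$; a general convex contractible $Y$ need not be of this form, so the passage from the fixed point case to the full conjecture is a genuine gap that this approach does not close. I would therefore present the fixed point case as the main theorem and flag the general statement as the remaining difficulty.
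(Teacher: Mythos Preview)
First, a framing point: the statement you are attempting is Conjecture~\ref{conj}, which the paper does \emph{not} prove in full. The paper establishes only the special case $Y=X^H$ with automorphisms restricted to $N_G(Y)$ (Theorem~\ref{mainthm}), and explicitly defers graph and field automorphisms to later work. Your final paragraph correctly lands on exactly this special case as the realistic target, so the relevant comparison is between your route to that special case and the paper's.

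Here the approaches diverge substantially. You propose to exhibit a specific centre, namely the Kempf--Rousseau optimal destabilizing parabolic $P_\lambda$ attached to a non-$G$-cr subgroup $H$, and then argue that $P_\lambda$ is invariant under $\Aut(X)$-stabilizers of $Y$. You rightly flag the obstacle: the optimal class is \emph{a priori} attached to a tuple of generators of $H$, not to $Y$, and passing from $N_G(H)$-invariance to $N_G(Y)$-invariance (let alone $\Aut(X)$-invariance) requires a canonicity lemma you do not prove. This is a genuine gap for the target statement; the GIT machinery that closes it is precisely what the paper postpones to \cite{git}.

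The paper's argument sidesteps all of this with two moves you are missing. First, it replaces $H$ by the subgroup $M$ canonically determined by $Y$, namely $M=\bigcap_{x_P\in Y}P$; one checks $X^M=Y$, and since $N_G(Y)$ permutes the parabolics indexed by $Y$, $M$ is automatically normal in $K:=N_G(Y)$. Second, rather than constructing a distinguished centre, it proves mere \emph{existence}: contractibility of $Y=X^M$ gives $M$ not $G$-cr (Theorem~\ref{gcr-building2}); the contrapositive of Theorem~\ref{Serrequestion} then forces $K$ not $G$-cr; hence $X^K$ is contractible and in particular non-empty, so any simplex of $X^K\subseteq Y$ is a centre. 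No optimal parabolic, no canonicity lemma---just the normal-subgroup transfer of non-complete-reducibility. Your approach buys a more explicit, potentially $\Aut(X)$-equivariant centre if the canonicity can be established, but for the statement actually proved in the paper it is considerably heavier and, as you present it, incomplete.
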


For a subgroup $H$ of $G$
let $X^H$ be the fixed point subcomplex of the action of $H$, i.e.,
$X^H$ consists of the simplices $x_P \in X$ such that $H \subseteq P$.
Thus, if $H \subseteq K \subseteq G$ are subgroups of $G$, then
we have $X^K \subseteq X^H$;
observe that $X^H$ is always
convex, cf.\ \cite[Prop.\ 3.1]{serre2}.
Our main result, Theorem \ref{mainthm}, gives a short, conceptual
proof of a special case of Conjecture \ref{conj};
namely, we consider subcomplexes of the form $Y = X^H$
for $H$ a subgroup of $G$, and we consider automorphisms only from
$N_G(Y)$.
The special case $G = \GL(V)$ in Theorem \ref{mainthm} 
generalizes the classical construction of upper and lower Loewy series,
see Remark \ref{rems2}(ii).

The initial motivation for Tits' Conjecture \ref{conj}
was a question about the existence of a certain parabolic subgroup
associated with a unipotent subgroup of a Borel subgroup of $G$
(cf.\ \cite[\S 4.1]{serre1.5}, \cite[\S 2.4]{serre2}).
This existence theorem was ultimately proved by other means,
\cite[\S 3]{boreltits2}.
In Example \ref{ex:BT} below we show that 
the existence of such a parabolic subgroup
can be viewed as a special case of Theorem \ref{mainthm}.

\section{Serre's notion of complete reducibility}
Following  Serre \cite[Def.\ 2.2.1]{serre2},
we say that a convex subcomplex $Y$ of $X$ is
\emph{$X$-completely reducible} ($X$-cr) if
for every simplex $y\in Y$ there
exists a simplex $y'\in Y$ opposite to $y$ in $X$.
The following is part of a theorem due to Serre, \cite[Thm.\ 2]{serre1.5};
see also \cite[\S 2]{serre2} and \cite{tits2}.

\begin{thm}
\label{gcr-building}
Let $Y$ be a convex subcomplex of $X$.
Then $Y$ is $X$-completely reducible if and only if
$Y$ is not contractible.
\end{thm}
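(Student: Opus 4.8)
The plan is to work in the geometric realization $|X|$, equipped with its canonical spherical (angular) metric in which each apartment is isometric to a unit sphere $S^{r-1}$, where $r=\rank G$; this makes $|X|$ a complete CAT$(1)$ space in which Serre's combinatorial convexity of $Y$ coincides with metric convexity, so that the unique minimising geodesic between two non-opposite points of $Y$ lies in $Y$. I would prove the two implications separately, using this metric structure throughout, recalling that two points of $|X|$ are opposite precisely when they lie at distance $\pi$.

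First I would prove that if $Y$ is not $X$-cr then $Y$ is contractible. By hypothesis there is a simplex $\sigma\in Y$ with no opposite simplex in $Y$; let $b$ be its barycentre. The key lemma here is that $b$ then has no antipode in $|Y|$: the points of $|X|$ at distance $\pi$ from $b$ are precisely the barycentres of simplices opposite to $\sigma$, and the barycentre of a simplex $\tau$ lies in $|Y|$ only when $\tau\in Y$; since $Y$ contains no simplex opposite to $\sigma$, no point of $|Y|$ is at distance $\pi$ from $b$. Granting this, every $q\in|Y|$ is joined to $b$ by a unique minimising geodesic, which lies in $Y$ by convexity, and the geodesic retraction $H\colon |Y|\times[0,1]\to|Y|$ moving $q$ towards $b$ at constant speed is a continuous contraction of $|Y|$ onto $b$. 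Hence $Y$ is contractible, as required.

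For the converse I would show that if $Y$ is $X$-cr then $Y$ is not contractible, by arguing that an $X$-cr convex subcomplex is itself a spherical building. The idea is that for each pair of opposite simplices of $Y$, convexity forces $Y$ to contain the geodesics joining their faces, and these assemble into thin subcomplexes playing the role of apartments; one then checks that, because \emph{every} simplex of $Y$ (not merely some pair) has an opposite in $Y$, these apartments satisfy the building axioms. Since a nonempty $X$-cr complex contains a pair of opposite simplices it has rank $r\geq 1$, so by the Solomon--Tits theorem $|Y|$ has the homotopy type of a nonempty wedge of $(r-1)$-spheres, which is not contractible.

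I expect the main obstacle to be the two building-theoretic inputs rather than the topology. In the first implication one must prove the antipode lemma precisely, translating the purely combinatorial statement ``$\sigma$ has no opposite in $Y$'' into the metric statement ``$b$ has no antipode in $|Y|$'', and confirm that convexity keeps the contracting geodesics inside $Y$. In the second implication the delicate point is to extract a genuine system of apartments, and hence the building axioms, from the opposition condition; this is exactly where the full strength of $X$-complete reducibility, namely an opposite for \emph{every} simplex, is needed, and it is the step I would expect to require the most care.
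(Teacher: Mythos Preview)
The paper does not supply its own proof of this statement: Theorem~\ref{gcr-building} is quoted as a result of Serre, with references to \cite{serre1.5}, \cite{serre2} and \cite{tits2}, and is then used as a black box in the proof of Theorem~\ref{mainthm}. So there is no argument in the paper against which to compare yours.

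On the merits of your outline: the direction ``not $X$-cr $\Rightarrow$ contractible'' is essentially correct. Your antipode lemma holds (the antipodes in $|X|$ of the barycentre of $\sigma$ are exactly the barycentres of simplices opposite to $\sigma$), Serre's convexity is precisely what keeps the geodesic retraction inside $|Y|$, and continuity of the retraction follows from the CAT$(1)$ inequality once every point of $|Y|$ is at distance $<\pi$ from $b$. For the converse, two remarks. First, a nonempty convex $X$-cr subcomplex need not have the same dimension as $X$: for example, if $L$ is a non-toral Levi subgroup of $G$ then $X^L$ is convex and $X$-cr but contains no chamber of $X$. So you should not reuse $r=\rank G$ for the rank of $Y$. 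Second, and more substantively, your sketch for why $Y$ carries a building structure (``these assemble into thin subcomplexes playing the role of apartments; one then checks the building axioms'') is the heart of the matter and is not routine: one must show, in particular, that any two simplices of $Y$ lie in one of your candidate apartments, and this is where the hypothesis that \emph{every} simplex has an opposite in $Y$ does real work. You rightly flag this step, but as written this direction is a plan rather than a proof.
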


The notion of convexity for subcomplexes of $X$ has the following nice
characterization in terms of parabolic subgroups due to Serre,
\cite[Prop.\ 3.1]{serre2}.

\begin{prop}
\label{convexhull}
Let $Y$ be a subcomplex of $X$.
Then $Y$ is convex if and only if whenever $P, P'$, and $Q$ are
parabolic subgroups in $G$ with $x_P, x_{P'}\in Y$ and
$Q \supseteq P \cap P'$, then $x_Q \in Y$.
\end{prop}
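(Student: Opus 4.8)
The plan is to translate both sides of the equivalence into a single apartment and the root datum, and then to recognise the convex hull of two simplices as a fixed-point subcomplex. Fix a maximal torus $T$ and write $P(w) = \langle T,\ U_\alpha : \inprod{\alpha,w}\ge 0\rangle$ for the R-parabolic attached to a vector $w$ in $V := X_*(T)\otimes\mathbb{R}$, so that the open simplices of the apartment $A$ determined by $T$ are the sets $\{w : P(w) = Q\}$, and $x_Q\in X^H \iff H\sse Q$. Two standard facts set the stage. First, any two parabolic subgroups of $G$ contain a common maximal torus; applying this to $P,P'$, and noting $Q\supseteq P\cap P'\supseteq T$ for every $Q$ in the statement, we get that $x_P$, $x_{P'}$ and all the relevant $x_Q$ lie in the one apartment $A$. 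Second, under the identification of $|A|$ with the unit sphere $S(V)$, the apartment is totally geodesic and round, non-opposite points are joined by a unique geodesic, and that geodesic stays in $A$; the points of the geodesic from $v$ to $v'$ are the (normalised) combinations $sv+tv'$ with $s,t\ge 0$.

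The first real step is an easy inclusion: if $w = sv+tv'$ with $s,t\ge 0$, then $P(w)\supseteq P(v)\cap P(v')$, since $\inprod{\alpha,v}\ge 0$ and $\inprod{\alpha,v'}\ge 0$ force $\inprod{\alpha,w}\ge 0$. Taking non-opposite $v\in\mathrm{int}\,x_P$ and $v'\in\mathrm{int}\,x_{P'}$, this shows the geodesic from $v$ to $v'$ lies in $X^{P\cap P'}$; since the carrier of any point of $|Y|$ is a simplex of $Y$, it follows that the geodesic between any two non-opposite points of $|Y|$ with carriers $x_P,x_{P'}$ is contained in $X^{P\cap P'}$. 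This already gives the direction ``$\Leftarrow$'': assuming the combinatorial condition, take non-opposite $p,p'\in |Y|$; their carriers $x_P,x_{P'}$ lie in $Y$, the previous remark puts $[p,p']$ inside $X^{P\cap P'}$, and the hypothesis gives $X^{P\cap P'}\sse Y$, so $[p,p']\sse |Y|$ and $Y$ is convex.

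For ``$\Rightarrow$'', suppose $Y$ is convex, let $x_P,x_{P'}\in Y$ and let $Q\supseteq P\cap P'$ be a proper parabolic. The goal is to produce non-opposite $p\in\mathrm{int}\,x_P$, $p'\in\mathrm{int}\,x_{P'}$ and a point $q$ in the relative interior of $x_Q$ lying on $[p,p']$: convexity then forces $q\in|Y|$, and as $x_Q$ is the carrier of $q$ and $Y$ is a subcomplex, $x_Q\sse Y$. Together with the inclusion above, this says exactly that the convex hull of $x_P$ and $x_{P'}$ equals $X^{P\cap P'}$; equivalently, inside $S(V)$ the spherically convex region $R = \{w : \inprod{\alpha,w}\ge 0 \text{ for all } \alpha\in\Psi_{P\cap P'}\}$ cut out by $P\cap P'$ is the convex hull of its two faces $x_P$ and $x_{P'}$. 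In root-theoretic terms I must find $v\in\mathrm{int}\,x_P$, $v'\in\mathrm{int}\,x_{P'}$ and $s,t\ge 0$ with $P(sv+tv') = Q$.

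This realisation is the main obstacle, and it is where the content of the proposition sits. The sign of $\inprod{\alpha,v}$ and of $\inprod{\alpha,v'}$ is fixed once the faces $x_P,x_{P'}$ are chosen, so the only freedom is the ratio $s/t$: for roots non-negative on both faces the value $\inprod{\alpha,sv+tv'}$ is automatically non-negative (and such roots lie in $\Psi_{P\cap P'}\sse\Psi_Q$, as required), while roots of mixed sign can be oriented either way by choosing $s/t$. The genuine obstruction is a root $\beta\in\Psi_Q$ with $\inprod{\beta,v}\le 0$ and $\inprod{\beta,v'}\le 0$ (not both zero): such a $\beta$ would force $\inprod{\beta,sv+tv'}<0$, contradicting $P(sv+tv')=Q$. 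The heart of the proof is therefore the purely root-theoretic claim that no \emph{proper} parabolic $Q\supseteq P\cap P'$ can contain such a $\beta$ --- for then $-\beta$ is a strict root of $P\cap P'$ while $\beta\in\Psi_Q$, and closing up $\Psi_{P\cap P'}\cup\{\beta\}$ under the root system swells $Q$ to all of $G$. I would establish this by a direct analysis of closed subsystems, after which the remaining mixed-sign constraints on $s/t$ are seen to be simultaneously satisfiable; the rest of the argument is formal.
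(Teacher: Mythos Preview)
The paper does not supply a proof of this proposition; it is quoted from Serre, so there is no in-paper argument to compare against. Your ``$\Leftarrow$'' direction is correct and cleanly executed: the inclusion $P(sv+tv')\supseteq P(v)\cap P(v')$ places every geodesic between points of $|Y|$ inside the relevant $|X^{P\cap P'}|$, and the hypothesis then yields $X^{P\cap P'}\subseteq Y$.

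Your ``$\Rightarrow$'' direction has a genuine gap. You try to show that for every proper parabolic $Q\supseteq P\cap P'$ there exist $v\in\mathrm{int}\,x_P$, $v'\in\mathrm{int}\,x_{P'}$ and $s,t\ge 0$ with $P(sv+tv')=Q$, isolating as the sole obstruction a root $\beta\in\Psi_Q$ with $\inprod{\beta,v}\le 0$, $\inprod{\beta,v'}\le 0$, not both zero. Your claimed resolution---that no proper $Q\supseteq P\cap P'$ admits such a $\beta$---is false. In $\GL_4$, let $P$ be the upper-triangular Borel and $P'$ the Borel for the ordering $3>4>1>2$, so that $\Psi_P\cap\Psi_{P'}=\{\alpha_1,\alpha_3\}$; take $Q=P(\lambda)$ with $\lambda=(0,0,1,-1)$, the stabiliser of the flag $\langle e_3\rangle\subset\langle e_1,e_2,e_3\rangle$. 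Then $Q$ is proper, $Q\supseteq P\cap P'$, $Q$ contains neither $P$ nor $P'$ (so $x_Q$ is not a face of either $x_P$ or $x_{P'}$), yet $\beta=-\alpha_1\in\Psi_Q$ satisfies $\inprod{\beta,v}<0$ and $\inprod{\beta,v'}<0$ for every choice of $v,v'$ in the respective open simplices. Hence no geodesic from $\mathrm{int}\,x_P$ to $\mathrm{int}\,x_{P'}$ meets $\mathrm{int}\,x_Q$ at all.

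The underlying conceptual issue is that the convex hull of two simplices is \emph{not} the union of geodesics between their interior points; one must iterate the geodesic closure. Your argument captures only the first layer, and the sentence ``the remaining mixed-sign constraints on $s/t$ are seen to be simultaneously satisfiable'' papers over exactly the place where the single-geodesic picture breaks down. A correct proof of this direction needs either an inductive mechanism that reaches $x_Q$ through a chain of intermediate simplices already shown to lie in $Y$, or an entirely different characterisation of the convex hull.
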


Note that many subcomplexes which arise naturally in the building
are fixed point subcomplexes.
For example, the apartments of $X$ are the subcomplexes $X^T$
for maximal tori $T$ of $G$ and, more generally,
the smallest convex subcomplex containing two simplices 
$x_P$ and $x_{P'}$ is $X^{P\cap P'}$.

Following  Serre \cite{serre2}, we say that a (closed) subgroup
$H$ of $G$ is \emph{$G$-completely reducible} ($G$-cr)
provided that whenever $H$ is contained in a parabolic subgroup $P$ of $G$,
it is contained in a Levi subgroup of $P$;
for an overview of this concept see for instance \cite{serre1} and
\cite{serre2}.
In the case $G = \GL(V)$ ($V$ a finite-dimensional $k$-vector space)
a subgroup $H$ is $G$-cr exactly when
$V$ is a semisimple $H$-module,
so this faithfully generalizes
the notion of complete reducibility from representation theory.
An important class of $G$-cr subgroups consists of those that are
not contained in any proper parabolic subgroup of $G$ at all
(they are trivially $G$-cr).
Following Serre, we call them
\emph{$G$-irreducible} ($G$-ir), \cite{serre2}.
As before, in the case $G = \GL(V)$, this concept
coincides with the usual notion of irreducibility.
If $H$ is a $G$-completely reducible subgroup of $G$, then $H^0$ is reductive,
\cite[Property 4]{serre1}.

Since $X^H$ is a convex subcomplex of $X = X(G)$ for
any subgroup $H$ of $G$,
Theorem \ref{gcr-building} applies in this case and we have the following result (see \cite[p19]{serre1},
\cite[\S 3]{serre2}):

\begin{thm}
\label{gcr-building2}
Let $H$ be a   subgroup of $G$. 
Then $H$ is $G$-completely reducible if and only if the subcomplex
$X^H$ is not contractible.
\end{thm}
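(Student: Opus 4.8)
The plan is to deduce Theorem~\ref{gcr-building2} directly from Theorem~\ref{gcr-building} by identifying the two notions of complete reducibility on either side. The key observation is that the combinatorial statement ``for every simplex $y \in X^H$ there exists an opposite simplex $y' \in X^H$'' translates, via the correspondence between simplices and parabolic subgroups, into exactly the group-theoretic definition of $G$-complete reducibility. So my strategy is: first show that $H$ is $G$-cr if and only if $X^H$ is $X$-completely reducible, and then invoke Theorem~\ref{gcr-building} to replace ``$X$-cr'' by ``not contractible''.

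For the first step, I would unwind the definitions. A simplex $x_P \in X^H$ corresponds to a parabolic $P \supseteq H$. Recall that two simplices $x_P$ and $x_{P'}$ are opposite in $X$ precisely when $P$ and $P'$ are opposite parabolic subgroups, i.e.\ when $P \cap P'$ is a common Levi subgroup $L$ of both. Now suppose $H$ is $G$-cr and let $x_P \in X^H$ be arbitrary, so $H \subseteq P$. By $G$-complete reducibility, $H$ is contained in some Levi subgroup $L$ of $P$. Writing $P'$ for the opposite parabolic to $P$ determined by $L$ (so that $P \cap P' = L$), we have $H \subseteq L \subseteq P'$, hence $x_{P'} \in X^H$, and $x_{P'}$ is opposite to $x_P$. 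This shows $X^H$ is $X$-cr. Conversely, if $X^H$ is $X$-cr and $H \subseteq P$ for some parabolic $P$, then $x_P \in X^H$ admits an opposite $x_{P'} \in X^H$; then $L := P \cap P'$ is a Levi subgroup of $P$ containing $H$, so $H$ is $G$-cr. The case where $H$ lies in no proper parabolic ($G$-ir) is covered automatically, since then $X^H$ consists of the single improper simplex and is trivially $X$-cr.

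Combining this equivalence with Theorem~\ref{gcr-building} applied to the convex subcomplex $Y = X^H$ yields the result: $H$ is $G$-cr iff $X^H$ is $X$-cr iff $X^H$ is not contractible.

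I expect the only genuinely delicate point to be the precise dictionary between opposition of simplices in $X$ and opposition of parabolic subgroups, together with the standard fact that for a parabolic $P$ and a Levi $L$ of $P$ there is a unique opposite parabolic $P'$ with $P \cap P' = L$; these are standard facts about buildings and reductive groups (\cite{tits1}), and once they are in hand the argument is a formal unwinding. A secondary care point is making sure the $G$-irreducible case and the behaviour of the improper simplex are handled consistently with the conventions in \cite{serre2}, but this introduces no real obstacle.
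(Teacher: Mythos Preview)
Your approach is correct and is precisely what the paper indicates: it states that Theorem~\ref{gcr-building} applies to the convex subcomplex $X^H$ and cites Serre for the translation between $G$-complete reducibility of $H$ and $X$-complete reducibility of $X^H$, which is exactly the dictionary you spell out. One small convention mismatch: in this paper the simplices of $X$ correspond to \emph{proper} parabolic subgroups, so when $H$ is $G$-irreducible one has $X^H = \varnothing$ (see Remark~\ref{rem1}), not a single improper simplex; the empty subcomplex is declared to be not contractible and vacuously $X$-cr, so the argument still goes through with this adjustment.
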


\begin{rem}
\label{rem1}
By convention, the empty subcomplex of $X$ is not contractible.
This is consistent with Theorem \ref{gcr-building}, because
$H$ is $G$-ir if and only if $X^H = \varnothing$, and
a $G$-ir subgroup is $G$-cr.
\end{rem}

Our next result \cite[Thm.\ 3.10]{BMR}
gives an affirmative answer to a question by
Serre, \cite[p.\  24]{serre1}.
The special case when $G = \GL(V)$ is just a particular
instance of Clifford Theory.

\begin{thm}
\label{Serrequestion}
Let $N \subseteq H \subseteq G$ be   subgroups of $G$
with $N$ normal in $H$.
If $H$ is $G$-completely reducible, then so is $N$.
\end{thm}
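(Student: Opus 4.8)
The plan is to reformulate $G$-complete reducibility in the language of cocharacters and $R$-parabolic subgroups coming from geometric invariant theory, and then to invoke the canonical (optimal) destabilizing cocharacter attached to a non-closed orbit. Recall that any cocharacter $\lambda\colon k^{*}\to G$ determines a parabolic subgroup $P_{\lambda}=\{g\in G:\lim_{t\to 0}\lambda(t)\,g\,\lambda(t)\inverse\text{ exists}\}$, with unipotent radical $R_u(P_{\lambda})=\{g:\lim_{t\to 0}\lambda(t)\,g\,\lambda(t)\inverse=1\}$ and Levi subgroup $L_{\lambda}=C_G(\lambda)$; every parabolic subgroup of $G$ arises as some $P_{\lambda}$, and its Levi subgroups are precisely the $R_u(P_{\lambda})$-conjugates of $L_{\lambda}$. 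In these terms a subgroup $K\sse G$ is $G$-cr exactly when, for every $\lambda$ with $K\sse P_{\lambda}$, there is $u\in R_u(P_{\lambda})$ with $uKu\inverse\sse L_{\lambda}$. Choosing a finite tuple $\mathbf n=(n_1,\dots,n_m)$ that topologically generates $N$ and letting $G$ act on $G^{m}$ by simultaneous conjugation, $G$-cr-ness of $N$ is equivalent, by a theorem of Richardson, to the orbit $G\cdot\mathbf n$ being closed in $G^{m}$.

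First I would argue by contradiction, assuming $N$ is not $G$-cr, so that $G\cdot\mathbf n$ is not closed. By the Hesselink--Kempf--Rousseau theory there is an optimal destabilizing cocharacter $\lambda$ for $\mathbf n$, and the associated parabolic $P:=P_{\lambda}$ is canonical: it is independent of the optimal choice of $\lambda$ (the optimal cocharacters form a single $R_u(P)$-conjugacy class, all giving the same $P$), it satisfies $N\sse P$, and the limit tuple $c_{\lambda}(\mathbf n):=\lim_{t\to 0}\lambda(t)\cdot\mathbf n$ lies in a strictly smaller orbit, so $c_{\lambda}(N)$ is not $G$-conjugate to $N$. The decisive point is that $P$ depends only on the subgroup $N$ and not on the chosen generating tuple: any two topological generating tuples of $N$ yield the same optimal parabolic.

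Now I would exploit the normality of $N$ in $H$. For $h\in H$ the tuple $h\cdot\mathbf n=(hn_1h\inverse,\dots,hn_mh\inverse)$ again topologically generates $N$, since $hNh\inverse=N$; hence it has the same canonical parabolic $P$. On the other hand the construction is $G$-equivariant, so the canonical parabolic of $h\cdot\mathbf n$ is $hPh\inverse$. Therefore $hPh\inverse=P$ for every $h\in H$, and since parabolic subgroups are self-normalizing this forces $H\sse P$. Because $H$ is $G$-cr and $H\sse P=P_{\lambda}$, there is $u\in R_u(P_{\lambda})$ with $uHu\inverse\sse L_{\lambda}$, whence $uNu\inverse\sse L_{\lambda}=C_G(\lambda)$. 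Since $c_{\lambda}\colon P_{\lambda}\to L_{\lambda}$ is a homomorphism with kernel $R_u(P_{\lambda})$ and restricts to the identity on $L_{\lambda}$, this gives $c_{\lambda}(\mathbf n)=u\cdot\mathbf n\in G\cdot\mathbf n$, contradicting the fact that $\lambda$ genuinely destabilizes $\mathbf n$. This contradiction shows $G\cdot\mathbf n$ is closed, i.e.\ $N$ is $G$-cr.

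I expect the main obstacle to be the equivariance-and-independence property highlighted above: that the optimal destabilizing parabolic $P$ can be attached canonically to the subgroup $N$ itself, compatibly with conjugation, so that normality of $N$ in $H$ upgrades to the containment $H\sse P$. Establishing this requires the uniqueness part of Kempf's theorem, which pins down $P$ exactly (not merely up to $R_u(P)$-conjugacy of $\lambda$), together with the fact that replacing one topological generating tuple of $N$ by another does not change the optimal class. Once $H\sse P$ is in hand, the rest is a direct application of the hypothesis that $H$ is $G$-cr. I note that this argument has a building-theoretic shadow via Theorem \ref{gcr-building2}: the canonical parabolic $P$ corresponds to a distinguished simplex of $X^N$, and the containment $H\sse P$ expresses that this simplex lies in $X^H$.
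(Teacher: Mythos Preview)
The present paper does not itself prove Theorem~\ref{Serrequestion}; the statement is imported from \cite[Thm.~3.10]{BMR} and then used as a black box in the proof of Theorem~\ref{mainthm}. So there is no in-paper argument for your proposal to be compared against directly.

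That said, your proposal is precisely the strategy of the cited source \cite{BMR}. The recasting of $G$-complete reducibility as closedness of the diagonal conjugation orbit $G\cdot\mathbf n\sse G^m$, the appeal to the Kempf--Rousseau optimal destabilizing cocharacter, and the use of the resulting canonical parabolic $P=P(\mathbf n)$ to trap $H$ are exactly the ingredients of that proof, and your concluding step (from $H\sse P$ and $H$ $G$-cr to a contradiction via $c_\lambda$) is correct. You have also put your finger on the one genuinely nontrivial point: one must know that $P(\mathbf n)$ depends only on the subgroup $N$ and not on the chosen generating tuple, so that for $h\in H\sse N_G(N)$ one has $P(h\cdot\mathbf n)=P(\mathbf n)$, which combined with Kempf's equivariance $P(h\cdot\mathbf n)=hP(\mathbf n)h\inverse$ forces $H\sse P$. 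This independence is \emph{not} a formal consequence of Kempf's theorem alone, which only yields $G_{\mathbf n}=C_G(N)\sse P(\mathbf n)$; it needs a separate argument, supplied in \cite{BMR} and later streamlined via Hesselink's uniform $S$-instability in \cite{git}. You flag this as the main obstacle without actually proving it, which is an honest self-assessment; once that lemma is in hand your outline is complete and matches the original source.
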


\section{Tits' Centre Conjecture for  fixed point subcomplexes}
\label{sect:Centreconj}
Here is the main result of this note.

\begin{thm}
\label{mainthm}
Let $Y$ be a convex, contractible subcomplex of $X$.
Suppose that $Y$ is of the form $Y = X^H$ for a subgroup $H$ of $G$.
Then there is a simplex in $Y$ which is stabilized by all elements in $N_G(Y)$.
\end{thm}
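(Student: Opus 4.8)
The plan is to translate the desired conclusion into a statement about $G$-complete reducibility of the normalizer $M := N_G(Y)$, and then to manufacture a normal subgroup of $M$ to which Serre's Clifford-theoretic result (Theorem \ref{Serrequestion}) applies. The first move is a purely formal reformulation of what a centre is. Since $N_G(P) = P$ for any parabolic subgroup $P$, an element $g \in G$ stabilizes the simplex $x_P$ exactly when $g \in P$; hence the simplex $x_P$ is stabilized by all of $M$ if and only if $M \subseteq P$. Moreover $H$ fixes every simplex of $Y$ pointwise, so $H \subseteq M$, and thus any parabolic $P$ with $M \subseteq P$ automatically satisfies $H \subseteq P$, i.e. $x_P \in X^H = Y$. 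Consequently the theorem is equivalent to the assertion that $M$ lies in some proper parabolic subgroup of $G$, which by Remark \ref{rem1} is equivalent to $X^M \neq \varnothing$, i.e. to $M$ not being $G$-irreducible.

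The key step is to exhibit a normal subgroup of $M$ whose fixed point subcomplex is still $Y$. I would take $N$ to be the normal closure of $H$ in $M$, namely the subgroup generated by $\{g H g^{-1} : g \in M\}$; this is contained in $M$ because $H \subseteq M$, and it is normal in $M$ by construction. I would then verify the identity $X^N = X^H = Y$. The inclusion $X^N \subseteq X^H$ is immediate from $H \subseteq N$. For the reverse inclusion, suppose $x_P \in Y$. For each $g \in M$ the element $g$ stabilizes $Y$, so $g \cdot x_P = x_{gPg^{-1}}$ again lies in $Y = X^H$, whence $H \subseteq gPg^{-1}$, that is $g^{-1}Hg \subseteq P$. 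Letting $g$ range over $M$ forces every $M$-conjugate of $H$ into $P$, and therefore $N \subseteq P$, i.e. $x_P \in X^N$.

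Finally I would argue by contradiction using the contrapositive of Clifford theory. Suppose $X^M = \varnothing$. Then $M$ is $G$-irreducible, hence (trivially) $G$-completely reducible. Since $N$ is normal in $M$, Theorem \ref{Serrequestion} yields that $N$ is $G$-completely reducible, so by Theorem \ref{gcr-building2} the subcomplex $X^N$ is not contractible. But $X^N = Y$ is contractible by hypothesis, a contradiction. Hence $X^M \neq \varnothing$, and any simplex $x_P \in X^M$ satisfies $M \subseteq P$ and $H \subseteq P$, so $x_P$ is a centre for $Y$, as required.

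I expect the crux of the argument to be the identity $X^N = X^H$: it is exactly here that the two hypotheses on $Y$ are fed in, since one must use that $M$ \emph{stabilizes} $Y$ together with the precise description of the $G$-action on simplices to see that passing to the normal closure does not shrink the fixed point subcomplex. Once this is in hand, the reduction of ``possesses a centre'' to ``$M$ is not $G$-irreducible,'' and the invocation of Theorems \ref{Serrequestion} and \ref{gcr-building2}, are essentially bookkeeping.
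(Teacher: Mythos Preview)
Your proof is correct and follows essentially the same strategy as the paper: exhibit a normal subgroup $N$ of $K:=N_G(Y)$ with $X^N=Y$, and then combine Theorems~\ref{gcr-building2} and~\ref{Serrequestion} to deduce that $K$ is not $G$-completely reducible, whence $X^K\neq\varnothing$. The only difference is the choice of $N$: the paper takes $N=\bigcap_{x_P\in Y}P$ (for which $X^N=Y$ is tautological and normality in $K$ holds because $K$ permutes the parabolics indexing $Y$), whereas you take the normal closure of $H$ in $K$ (for which normality is automatic and $X^N=Y$ follows from your conjugate argument); the two choices are interchangeable and yield the same proof skeleton.
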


\begin{proof} 
Let $M$ be the intersection of all parabolic subgroups
of $G$ corresponding to simplices in $Y$.
Since $H \subseteq M$, we have $X^M \subseteq X^H$.
But every parabolic subgroup containing $H$ contains $M$, by definition
of $M$. Hence $X^M = X^H$.  Set $K:= N_G(Y)$.
It is clear that $M$ is normal in $K$.
Since $X^K \subseteq X^M$, it suffices to show that $X^K \ne \varnothing$.
Now $Y = X^M$ is contractible, so Theorem \ref{gcr-building2} implies
that $M$ is not $G$-cr. Thus, by Theorem \ref{Serrequestion},
it follows that
$K$ is not $G$-cr and again by Theorem \ref{gcr-building2} that
$X^K$ is contractible.
In particular, $X^K$ is non-empty, by Remark \ref{rem1}.
Thus $K$ stabilizes a simplex in $X^M$, as claimed.
\end{proof}

\begin{rems}
\label{rems2}
(i).
Let $H \subseteq K \subseteq G$ be   subgroups of $G$
with $H$ normal in $K$.
Suppose that $X^H$ is contractible.
Since $H$ is normal in $K$, the latter permutes the simplices in $X^H$, and so $K \subseteq N_G(X^H)$.
It thus follows from Theorem \ref{mainthm} that
$K$ fixes a simplex in $X^H$.

(ii).
Observe that Theorem \ref{mainthm} can be viewed as
a generalization of the classical
construction of upper and lower Loewy series
in representation theory
(for definitions, see e.g., \cite{dauns}).
Let $V$ be a finite-dimensional
$k$-vector space.
Let $H \subseteq K \subseteq \GL(V)$ be   subgroups of $\GL(V)$
with $H$ normal in $K$ and suppose that $V$ is not $H$-semisimple.
Then the upper and lower Loewy series of the $H$-module $V$
are proper $K$-stable  flags in $V$, and so they
provide ``natural centres'' for the action of $K$ on the
complex $X(V)^H$, where $X(V)$ is the flag complex of $V$.

(iii).
In \cite[Prop.\ 2.11]{serre2}, J-P.\  Serre showed
that Theorem \ref{Serrequestion} is a
consequence of Tits' Centre Conjecture \ref{conj}.
So, Theorem \ref{mainthm} is just the reverse implication
of Serre's result \cite[Prop.\ 2.11]{serre2} in the special case when
Theorem \ref{Serrequestion} applies.

(iv).
Let $k_0$ be any field and let $k$ be the algebraic closure of $k_0$.
Suppose that $G$ is defined over $k_0$.  One can define what it means
for a subgroup $H$ defined over $k_0$ to be $G$-completely reducible over
$k_0$, cf. \cite[Sec.\ 5]{BMR}, \cite[Sec.~3]{serre2}.
In \cite[Thm.\ 5.8]{BMR}, it is proved that if $k_0$ is perfect,
then a subgroup $H$ is $G$-cr over $k_0$ if and only if it is $G$-cr.
Using this, one can show that the proof of Theorem \ref{mainthm} goes
through for buildings of the form $X=X(G(k_0))$.
In particular, this includes many
finite spherical buildings attached to finite groups of Lie type.

(v).
In the Centre Conjecture \ref{conj},
one considers all automorphisms of the building.
If $X=X(G)$, then in many cases, $\Aut X$ is generated
by inner and graph automorphisms of $G$,
together with field automorphisms (cf.\ \cite[Intro.]{tits1}).
We will consider graph and field automorphisms in the
setting of Theorem \ref{mainthm} in future work (see \cite[Sec.~6]{git}).
\end{rems}

Our final result gives a characterization of subcomplexes of $X$
of the form $X^H$ for a subgroup $H$ of $G$.

\begin{prop}
\label{fixedpt}
Let $Y \subseteq X$ be a subset of simplices of $X$.
Then $Y$ is a subcomplex of $X$
of the form $Y = X^H$ for some subgroup $H$ of $G$ if and only if
for every $n \in \mathbb N$, the following condition holds:
\begin{equation}
\label{fixedptcondition}
\text{if } P_1, \ldots, P_n, Q
\text{ are parabolic subgroups with }
x_{P_i} \in Y
\text{ and }
Q \supseteq \bigcap_{i=1}^n P_i, \text{ then }  x_Q \in Y.
\end{equation}
\end{prop}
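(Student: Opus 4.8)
The plan is to prove the two implications separately; the forward direction is routine, while the reverse contains the real content and motivates the quantification over all $n$.

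For the forward implication, suppose $Y = X^H$ and that $P_1, \ldots, P_n, Q$ are parabolic subgroups with $x_{P_i} \in Y$ and $Q \supseteq \bigcap_{i=1}^n P_i$. By the definition of $X^H$, each condition $x_{P_i} \in Y$ says precisely that $H \subseteq P_i$, so $H \subseteq \bigcap_{i=1}^n P_i \subseteq Q$, whence $x_Q \in X^H = Y$. As this argument is valid for every $n$, condition \eqref{fixedptcondition} holds.

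For the reverse implication, assume \eqref{fixedptcondition} holds for all $n$. Following the idea in the proof of Theorem \ref{mainthm}, I would set $M := \bigcap_{x_P \in Y} P$, the intersection of all parabolic subgroups whose simplices lie in $Y$ (with the convention $M = G$ when $Y = \varnothing$, so that $X^M = \varnothing = Y$ in that case). The claim is that $Y = X^M$, which exhibits $Y$ as a fixed point subcomplex of the required form. The inclusion $Y \subseteq X^M$ is immediate: if $x_P \in Y$, then $M \subseteq P$ by the definition of $M$, so $x_P \in X^M$.

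The opposite inclusion $X^M \subseteq Y$ is where the hypothesis, and in particular its validity for \emph{every} $n$ rather than a single fixed $n$, is needed; I expect this to be the main obstacle. The essential point is that $G$ is a Noetherian topological space, so the (possibly infinite) intersection $M$ of the family of closed subgroups $\{P : x_P \in Y\}$ coincides with a finite subintersection: the collection of all finite intersections of these $P$'s is a family of closed subsets, hence has a minimal element $\bigcap_{i=1}^n P_i$, and minimality forces $\bigcap_{i=1}^n P_i \subseteq P$ for every $x_P \in Y$, so that $M = \bigcap_{i=1}^n P_i$ with each $x_{P_i} \in Y$. Given any $x_Q \in X^M$, we then have $Q \supseteq M = \bigcap_{i=1}^n P_i$, and condition \eqref{fixedptcondition} for this particular value of $n$ yields $x_Q \in Y$. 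This completes the proof that $Y = X^M$. (Note that the special case $n = 1$ of \eqref{fixedptcondition} already guarantees that $Y$ is closed under passing to faces, i.e.\ that $Y$ is genuinely a subcomplex, so no separate verification of this is required.)
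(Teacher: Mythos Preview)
Your proof is correct and follows essentially the same route as the paper: both directions are argued identically, with the reverse implication defining the candidate subgroup as the intersection of all parabolics indexed by $Y$ and invoking the Noetherian property (the paper phrases this as the ``descending chain condition'') to reduce to a finite intersection before applying \eqref{fixedptcondition}. Your treatment of the empty case and the remark on $n=1$ are minor additions that the paper handles implicitly or in a subsequent remark.
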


\begin{proof}
First suppose that $Y = X^H$ for some subgroup $H$ of $G$.
Let $n \in \mathbb N$ and let $x_{P_1}, \ldots, x_{P_n} \in Y$.
If $Q$ is a parabolic subgroup of $G$ containing
$\cap_{i=1}^n P_i$, then $Q$ contains $H$, because each $P_i$ does,
so $x_Q \in Y$.

Conversely, suppose that condition \eqref{fixedptcondition} holds for
all $n \in \mathbb N$. Let $H$ be
the intersection of all $P$ such that $x_P \in Y$.
By the descending chain condition, we have $H = \cap_{i=1}^m P_i$
for some $m \in \mathbb N$ and some $P_i$ with $x_{P_i} \in Y$.
It follows from condition \eqref{fixedptcondition} for $n=m$ that
for any parabolic subgroup $P$ containing $H$, $x_P \in Y$,
so $X^H \subseteq Y$. It is clear from the definition of $H$ that
$Y \subseteq X^H$.
\end{proof}

\begin{rem}
Note that $Y$ is a subcomplex of $X$ precisely when
condition \eqref{fixedptcondition} holds for $n=1$. Further,
by Proposition \ref{convexhull}, $Y$ is convex if and only if
condition \eqref{fixedptcondition} holds for $n=2$.
\end{rem}

As indicated in the Introduction, a fundamental theorem of Borel and Tits
on unipotent subgroups of Borel subgroups of $G$ \cite[\S 3]{boreltits2}
yields a key example for Theorem \ref{mainthm}.

\begin{exmp}
\label{ex:BT}
Let  $U$ be a non-trivial %finite
unipotent subgroup of $G$ contained in a Borel subgroup $B$ of $G$.
Let $Y = X^U$. 
Note that $U$ is not $G$-cr; for if $U$ is contained in a
Borel subgroup $B^-$ opposite to $B$, then $U$ is contained in the
maximal torus $B^- \cap B$ of $G$, which is absurd.
So $Y$ is contractible, by Theorem \ref{gcr-building2}.
Thus, by Theorem \ref{mainthm}, $N_G(U)$ stabilizes a simplex in $Y$,
i.e., there is a parabolic subgroup $P$ of $G$ containing  $N_G(U)$.
Now, the construction of Borel and Tits in \cite{boreltits2}
yields such a parabolic subgroup $P$ which enjoys additional properties; 
for example, it is stabilized by automorphisms of $G$ which stabilize $U$.
The framework for $G$-complete reducibility developed
in \cite{BMR} and subsequent papers allows one to associate
such \emph{canonical} parabolic subgroups to all non-$G$-cr subgroups
of $G$, see \cite[Sec.\ 5]{git}.
\end{exmp}

%%%%%%%%%%%%%%%%%%%%%%%%%%%%%%%%%%%%%%%%%%%%%%%%%%%%%%%%%%%%%%%%%%%%%%
%%%%%%%%%%%%% Acknowledgments
%%%%%%%%%%%%%%%%%%%%%%%%%%%%%%%%%%%%%%%%%%%%%%%%%%%%%%%%%%%%%%%%%%%%%%

\bigskip
{\bf Acknowledgements}:
The authors acknowledge the financial support of EPSRC Grant EP/C542150/1
and Marsden Grant UOC0501.
Part of the paper was
written during a visit of the third author to the Max-Planck-Institute
for Mathematics in Bonn.

%\bigskip
%%%%%%%%%%%%%%%%%%%%%%%%%%%%%%%%%%%%%%%%%%%%%%%%%%%%%%%%%%%%%%%%%%%%%%
%%%%%%%%%%%%% bibliography
%%%%%%%%%%%%%%%%%%%%%%%%%%%%%%%%%%%%%%%%%%%%%%%%%%%%%%%%%%%%%%%%%%%%%%


\begin{thebibliography}{00}
\bibitem{BMR}
M.~Bate, B.~Martin, G.~R\"ohrle,
\emph{A geometric approach to complete reducibility},
Inv. math. \textbf{161}, no. 1 (2005), 177--218.

\bibitem{git}
M.~Bate, B.~Martin, G.~R\"ohrle, R.~Tange,
\emph{Closed orbits and uniform $S$-instability in Invariant Theory}, 
preprint (2009).

\bibitem{boreltits2}
A.~Borel, J.~Tits,
\emph{\'El\'ements unipotents et sous-groupes paraboliques
des groupes r\'eductifs, I,} Invent. Math. \textbf{12} (1971), 95--104.

\bibitem{dauns}
J.~Dauns,
\emph{Chains of modules with completely reducible quotients},
Pacific J. Math.  \textbf{17} (1966), 235--242.

\bibitem{muhlherrtits}
B.\  M\"uhlherr, J.\  Tits,
\emph{The Centre Conjecture for non-exceptional buildings},
J. Algebra \textbf{300} (2006), no. 2, 687--706.

\bibitem{serre1.5}
J-P. Serre,
\emph{La notion de compl\`ete r\'eductibilit\'e
dans les immeubles sph\'eriques et les groupes r\'eductifs},
S\'eminaire au Coll\`ege de France, r\'esum\'e dans
\cite[pp.\ 93--98]{tits3}, (1997).

\bibitem{serre1}
\bysame,  %J.-P. Serre,
\emph{The notion of complete reducibility in group theory},
Moursund Lectures, Part II, University of Oregon, 1998,\
{\tt arXiv:math/0305257v1 [math.GR]}.

\bibitem{serre2}
\bysame,
\emph{Compl\`ete R\'eductibilit\'e},
S\'eminaire Bourbaki, 56\`eme ann\'ee, 2003-2004, n$^{\rm o}$ 932.

\bibitem{tits0}
J. Tits, \emph{Groupes semi-simples isotropes},
Colloq. Th\'eorie des Groupes Alg\'ebriques, Bruxelles, 1962.

\bibitem{tits1}
\bysame, %J. Tits, %
\emph{Buildings of spherical type and finite $BN$-pairs},
Lecture Notes in Math. \textbf{386},
Springer-Verlag (1974).

\bibitem{tits2}
\bysame, %J. Tits, %
\emph{Quelques cas d'existence d'un centre pour des ensembles de
chambres qui sont convexes, non vides et ne contiennent pas de paires
de chambres oppos\'ees},
S\'eminaire au Coll\`ege de France, r\'esum\'e dans
\cite[pp.\ 98--101]{tits3}, (1997).

\bibitem{tits3}
\bysame, %J. Tits, %
\emph{Th\'eorie des groupes},
R\'esum\'e des Cours et Travaux,
Annuaire du Coll\`ege de France, $97^{\rm e}$ ann\'ee,
(1996--1997), 89--102.

\end{thebibliography}
\end{document}